\documentclass[11pt]{amsart}
\usepackage{amsmath, amssymb, verbatim}
\usepackage{pinlabel}
\usepackage{graphs}
\usepackage{thmtools, thm-restate}

\usepackage{graphicx}
\usepackage{color}
\usepackage[margin=2.5cm]{geometry}                
\DeclareGraphicsRule{.tif}{png}{.png}{`convert #1 `dirname #1`/`basename #1 .tif`.png}

\newtheorem{thm}{Theorem}
\newtheorem{cor}[thm]{Corollary}

\newtheorem{prop}[thm]{Proposition}

\theoremstyle{definition}
\newtheorem{defn}{Definition}



%
%

\newcommand{\xra}{\xrightarrow}

\newcommand{\si}{\sigma}

\newcommand{\la}{\lambda}
\newcommand{\tla}{\tilde\lambda}
\newcommand{\Si}{\Sigma}

\newcommand{\tB}{\widetilde{B}}

\renewcommand{\t}{\mathbf t}

\newcommand{\s}{\mathbf s}

\newcommand{\Z}{\mathbb Z}

\newcommand{\wHF}{\widehat{HF}}

\renewcommand{\bar}{\overline}

%

\title{Signatures, Heegaard Floer correction terms and quasi--alternating links}
\author{Paolo Lisca}
\author{Brendan Owens}
\thanks{The present work is part of the first author's activities within CAST, 
a Research Network Program of the European Science Foundation, and the PRIN--MIUR research project 
2010--2011 ``Variet\`a reali e complesse: geometria, topologia e analisi armonica''.
The second author was supported in part by  EPSRC grant EP/I033754/1.}


\begin{document}
\begin{abstract}
Turaev showed that there is a well--defined map assigning to an oriented link $L$ in the three--sphere 
a Spin structure $\t_0$ on $\Si(L)$, the two--fold cover of $S^3$ branched along $L$.  We prove, generalizing results 
of Manolescu--Owens and Donald--Owens, that for an oriented quasi--alternating link $L$ the signature of $L$ equals minus four 
times the Heegaard Floer correction term of $(\Si(L), \t_0)$.
\end{abstract}

\maketitle

\section{Introduction}\label{s:intro}
Vladimir Turaev~\cite[\S~2.2]{Tu88} proved that there is a surjective map which associates to a link $L\subset S^3$ decorated with an 
orientation $o$ a Spin structure $\t_{(L,o)}$ on $\Si(L)$, the double cover of $S^3$ branched along $L$. Moreover, he showed 
that the only other orientation on $L$ which maps to $\t_{(L,o)}$ is $-o$, the overall reversed orientation. In other words, 
Turaev described a bijection 
between the set of quasi--orientations on $L$ (i.e.~orientations up 
to overall reversal) and the set Spin$(\Si(L))$ of Spin structures on $\Si(L)$. Each element $\t\in{\rm Spin}(\Si(L))$ 
can be viewed as a Spin$^c$ structure on $\Si(L)$, so if $\Si(L)$ is a rational homology sphere it 
makes sense to consider the rational number $d(\Si(L),\t)$, where $d$ is the correction term invariant defined by Ozsv\'ath and Szab\'o~\cite{OSz03}. 
Under the assumption that $L$ is nonsplit alternating it was proved --- in~\cite{MOw07} when $L$ is a knot 
and in~\cite{DO12} for any number of components of $L$ --- that 
\begin{equation}\label{e:main}\tag{$*$}
\si(L,o) = -4 d(\Si(L), \t_{(L,o)})\quad\text{for every orientation $o$ on $L$},
\end{equation}
where $\si(L,o)$ is the link signature.  
For an alternating link associated to a plumbing graph with no bad vertices,
this follows from a combination of earlier results of Saveliev \cite{Sa00} and Stipsicz \cite{St08}, each of whom 
showed that one of the quantities in \eqref{e:main} is equal to  the Neumann-Siebenmann $\overline\mu$-invariant of the plumbing tree.
The main purpose of this paper is to prove Property~\eqref{e:main} for the family of 
{\em quasi--alternating links} introduced in~\cite{OSz05}: 

\begin{defn}\label{d:qa}
The {\em quasi--alternating} links are the links in $S^3$ with nonzero determinant defined recursively as follows: 
\begin{enumerate}
\item
the unknot is quasi--alternating; 
\item
if $L_0, L_1$ are quasi--alternating, $L\subset S^3$ is a link such that $\det L = \det L_0 + \det L_1$ and 
$L$, $L_0$, $L_1$ differ only inside a 3--ball as illustrated in Figure~\ref{f:skein}, then $L$ is quasi--alternating. 
\end{enumerate}
\begin{figure}[ht]
\labellist\hair 2pt
\pinlabel $L_1$ at 40 -10
\pinlabel $L$ at 193 -10
\pinlabel $L_0$ at 354 -10
\endlabellist
\centering
\includegraphics[scale=0.6]{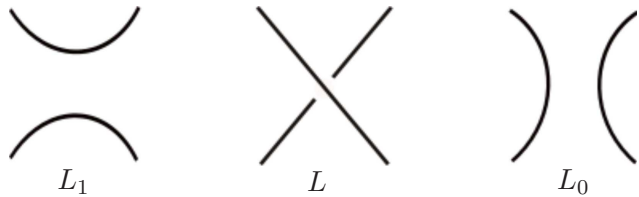}
\vspace{0.2cm}
\caption{$L$ and its resolutions $L_0$ and $L_1$.}
\label{f:skein}
\end{figure}
\end{defn}
Quasi--alternating links have recently been the object of considerable 
attention~\cite{CK09, CO12, Gr10, GW11, JS08, MO07, QCQ12, QQJ12, Wa09, Wi09}. Alternating links are 
quasi--alternating~\cite[Lemma~3.2]{OSz05}, but (as shown in e.g.~\cite{CK09}) there exist infinitely 
many quasi--alternating, non--alternating links. Our main result is the following:
\begin{restatable}{theorem}{main}\label{t:main2} 
Let $(L,o)$ be an oriented link. If $L$ is quasi--alternating then
\begin{equation}\label{e:main2}
\si(L,o) = -4 d(\Si(L), \t_{(L,o)}).
\end{equation}
\end{restatable}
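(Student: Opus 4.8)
I would argue by induction on the recursive structure of Definition~\ref{d:qa}, carrying along every orientation of the link at once, so that the inductive hypothesis is precisely~\eqref{e:main2} for all orientations. The base case is the unknot: here $\Si(L)=S^3$, the Turaev spin structure $\t_{(L,o)}$ is the unique one, and both sides of~\eqref{e:main2} vanish since $d(S^3,\t_0)=0$ and the unknot has zero signature.

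For the inductive step, suppose $L$ is produced from quasi--alternating links $L_0$ and $L_1$ as in Figure~\ref{f:skein}, with $\det L=\det L_0+\det L_1$, and assume~\eqref{e:main2} for all orientations of $L_0$ and of $L_1$. The first step is topological. Taking branched double covers, the three tangle--replacements of Figure~\ref{f:skein} yield three closed rational homology spheres which, up to relabelling, form a triad: they arise as three distinct fillings of a common knot complement, linked by $2$--handle cobordisms that fit into a surgery exact triangle. Since $L_0,L_1$ are quasi--alternating their branched double covers are $L$--spaces, and the equality $|H_1(\Si(L))|=\det L=\det L_0+\det L_1=|H_1(\Si(L_0))|+|H_1(\Si(L_1))|$ forces the triangle in $\wHF$ to degenerate into a short exact sequence; in particular $\Si(L)$ is an $L$--space, and the $2$--handle cobordism maps identify each spin$^c$ structure on $\Si(L)$ with a spin$^c$ structure on exactly one of $\Si(L_0)$, $\Si(L_1)$, the correction terms changing by an amount determined by the topology of the cobordism.

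The second step, which I expect to be the main obstacle, is to recognise Turaev's spin structures inside this picture. Exactly one of $L_0,L_1$ --- call it $L_\ep$ --- is the resolution of the distinguished crossing of $L$ compatible with $o$, so it inherits an orientation $o_\ep$; orient the remaining resolution arbitrarily. One must prove that, under the $2$--handle cobordisms above, the Turaev spin structures of the two resolutions are carried precisely to $\t_{(L,o)}$. This forces one to work with an explicit model for $\t_{(L,o)}$ --- via the Gordon--Litherland linking form of a spanning surface of $L$, or equivalently via Turaev's charges on a diagram --- and to follow the branched covering involution and the chosen spanning surfaces across the handle attachments; this bookkeeping is the genuinely new content.

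The third step is then a matching of local data. The link signature obeys a skein comparison: $\si(L,o)-\si(L_\ep,o_\ep)$ is a correction depending only on the sign of the resolved crossing, and a similar Goeritz--form comparison controls the disoriented resolution. One checks that this signature correction is exactly $-4$ times the correction--term shift produced by the cobordism map of Step~1. Feeding the inductive hypothesis for $(L_\ep,o_\ep)$ into the two resulting identities yields~\eqref{e:main2} for $(L,o)$ and completes the induction. (The same argument can be presented non--inductively by realising $\Si(L)$ as the boundary of the branched double cover $X_F$ of $D^4$ over a pushed--in spanning surface $F$ of $L$, whose intersection form is the Goeritz form of $F$; Gordon--Litherland then identifies $\tfrac14\bigl(c_1(\s)^2+b_2(X_F)\bigr)$ for the spin$^c$ extension $\s$ of $\t_{(L,o)}$ with $-\tfrac14\si(L,o)$, and one side of~\eqref{e:main2} follows from the correction--term inequality, the other from sharpness of $X_F$ for quasi--alternating $L$ --- but establishing sharpness rests again on the $L$--space/surgery--triangle input, so the substance is the same.)
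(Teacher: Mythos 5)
Your skeleton --- induction on the quasi--alternating recursion, the branched--cover triad, the collapse of the surgery triangle to a short exact sequence via $\det L=\det L_0+\det L_1$, and the matching of the signature skein relation against the $d$--invariant shift of a $2$--handle cobordism --- is exactly the paper's, and you correctly locate the main difficulty in identifying the Turaev spin structures under the cobordism maps. But that is precisely where your proposal has a genuine gap: you assert that ``the Turaev spin structures of the two resolutions are carried precisely to $\t_{(L,o)}$'' and gesture at Gordon--Litherland forms without supplying an argument. What the paper actually proves (Proposition~\ref{p:spincob}), using Turaev's explicit description of $\t_{(L,o)}$ as a pullback spin structure twisted by the class $h$ recording linking numbers mod $2$, is that $\t_{(L,o)}$ restricts to the bounding spin structure on one lifted framing curve $\tla_1$ and to the Lie structure on the other, and hence extends over exactly \emph{one} of the two surgery cobordisms --- the one involving the oriented resolution. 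Your claim about both resolutions is therefore false: for a positive crossing, $\t_{(L,o)}$ does not extend over $W\colon\Si(L)\to\Si(L_0)$ at all, and no relation with a correction term of $\Si(L_0)$ is available (nor is one needed; the induction closes using the oriented resolution alone).

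More seriously, even granting that the spin structure $\s_o$ on $V$ restricts to $\t_{(L_1,o_1)}$ and $\t_{(L,o)}$ on the two ends, the degree--shift formula only yields the equality $-4d(\Si(L),\t_{(L,o)})=-4d(\Si(L_1),\t_{(L_1,o_1)})-1$ if one knows that the component $F_{V,\s_o}$ sends the generator of $\wHF(\Si(L_1),\t_{(L_1,o_1)})$ \emph{onto} the generator of $\wHF(\Si(L),\t_{(L,o)})$, i.e.\ that this particular spin$^c$ component of the cobordism map is nonzero; your proposal tacitly assumes this. The paper proves it (Proposition~\ref{p:correction}) by an algebraic argument combining the ${\mathcal J}$--equivariance of cobordism maps over $\Z/2\Z$ with the exactness of~\eqref{e:exaseq}: non--spin spin$^c$ structures on the cobordisms contribute in conjugate pairs, so the image $F_W(\t_{(L,o)})$ has no spin component, and injectivity of $F_V$ together with surjectivity of $F_W$ forces a spin generator of $\wHF(\Si(L_1))$ to hit $\t_{(L,o)}$. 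Without this nonvanishing (your parenthetical sharpness argument would require the same input) the induction does not close. A smaller omission: when the distinguished crossing is negative the oriented resolution sits on the wrong side of the cobordism; the paper reduces to the positive case by passing to the mirror, using $\si(L^m,o^m)=-\si(L,o)$ and $d(-Y,\t)=-d(Y,\t)$, and you should make this reduction explicit.
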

The contents of the paper are as follows. In Section~\ref{s:prelim} we first recall some basic facts on Spin structures and 
the existence of two natural 4--dimensional cobordisms, one from $\Si(L_1)$ to $\Si(L)$, the other from 
$\Si(L)$ to $\Si(L_0)$. Then, in Proposition~\ref{p:spincob} we show that for an orientation $o$ on $L$  for which
the crossing in Figure~\ref{f:skein} is positive, the Spin structure $\t_{(L,o)}$ extends to the first cobordism 
but not to the second one. In Section~\ref{s:correction} we use this information together with the Heegaard Floer surgery exact triangle 
to prove Proposition~\ref{p:correction}, which relates the value of the correction term $d(\Si(L),\t_{(L,o)})$ 
with the value of an analogous correction term for $\Si(L_1)$. In Section~\ref{s:final}  we restate and prove our main result,  
Theorem~\ref{t:main2}. The proof consists of an inductive argument based on Proposition~\ref{p:correction} 
and the known relationship between the signatures of $L$ and $L_1$. The use of Proposition~\ref{p:correction} 
is made possible by the fact that up to mirroring $L$ one may always assume the crossing of Figure~\ref{f:skein} to 
be positive. We close Section~\ref{s:final} with Corollary~\ref{c:jones-turaev}, which uses results of Rustamov and Mullins to relate 
Turaev's torsion function for the two--fold branched cover of a quasi--alternating link $L$ with the Jones polynomial of $L$. 

\vskip2mm\noindent{\bf Acknowledgements.} The authors would like to thank the anonymous referee 
for suggestions which helped to improve the exposition.


\section{Triads and Spin structures}\label{s:prelim} 

A Spin structure on an $n$--manifold $M^n$ is a double cover of the oriented frame bundle of $M$ with the added condition that if $n>1$, it 
restricts to the nontrivial double cover on fibres.  A Spin structure on a manifold restricts to give a Spin structure on a codimension--one 
submanifold, or on a framed submanifold of codimension higher than one.
As mentioned in the introduction, an orientation $o$ on a link $L$ in $S^3$ induces a Spin structure $\t_{(L,o)}$ on the double--branched 
cover $\Si(L)$, as in \cite{Tu88}.  Recall also that there are two Spin structures on $S^1=\partial D^2$: the nontrivial or \emph{bounding} 
Spin structure, which is the restriction of the unique Spin structure on $D^2$, and the trivial or \emph{Lie} Spin structure, which does not 
extend over the disk. The restriction map from Spin structures on a solid torus to Spin structures on its boundary is injective; thus if two Spin structures 
on a closed 3--manifold agree outside a solid torus then they are the same. 
For more details on Spin structures see for example~\cite{Kirby89}.

If $Y$ is a 3--manifold with a Spin structure $\t$ and $K$ is a knot in $Y$ with framing $\lambda$, we may attach a 2--handle to $K$ giving 
a surgery cobordism $W$ from $Y$ to $Y_\lambda(K)$.  
There is a  unique Spin structure on $D^2\times D^2$, which restricts to the bounding Spin structure on each framed circle 
$\partial D^2\times\{\mathrm{point}\}$ in $\partial D^2\times D^2$. Thus the Spin structure on $Y$ extends over $W$ if and only if its restriction to $K$, 
viewed as a framed submanifold via the framing $\lambda$, is the bounding Spin structure.  Note that this is equivalent, symmetrically, 
to the restriction of $\t$ to the submanifold $\lambda$ framed by $K$ being the bounding Spin structure.  
Moreover, the extension over $W$ is unique if it exists.

Let $L$, $L_0$, $L_1$ be three links in $S^3$ differing only in a 3--ball $B$ as in Figure~\ref{f:skein}.  The double cover of $B$ branched 
along the pair of arcs $B\cap L$ is a solid torus $\tB$ with core $C$.  The boundary of a properly embedded disk in $B$ which separates the two branching 
arcs lifts to a disjoint pair of meridians of $\tB$.
The preimage in $\Si(L)$ of the curve $\lambda_0$ shown in Figure \ref{f:lambdas} is a pair of parallel framings for $C$; denote one of 
these by $\tla_0$.  Similarly, let $\tla_1$ denote one of the components of the preimage in $\Si(L)$ of $\lambda_1$.
Since $\lambda_0$ is homotopic in $B-L$ to the boundary of a disk separating the two components of $L_0\cap B$, 
we see that $\Si(L_0)$ is obtained from $\Si(L)$ by $\tla_0$--framed surgery on $C$. Similarly, $\lambda_1$  is homotopic in $B-L$ to the boundary of a disk 
separating the two components of $L_1\cap B$, and $\Si(L_1)$ is obtained from $\Si(L)$ by $\tla_1$--framed surgery on $C$.

The two framings $\tla_0$ and $\tla_1$ differ by a meridian of $C$.  
In the terminology from \cite{OSz05}, the manifolds $\Si(L)$, $\Si(L_0)$ and $\Si(L_1)$ form a \emph{triad} and there are 
surgery cobordisms
\begin{equation}\label{e:surgcobs}
V:\Si(L_1)\to\Si(L),\quad\text{and}\quad W:\Si(L)\to\Si(L_0).
\end{equation}
The surgery cobordism $W$ is built by attaching a 2--handle to $\Si(L)$ along the knot $C$ with framing $\tla_0$.  The cobordism $V$ is built by attaching a 2--handle to $\Si(L_1)$.  Dualising this handle structure, $V$ is obtained by attaching a 2--handle to $\Si(L)$ along the knot $C$ with framing $\tla_1$ (and reversing orientation).

\begin{figure}[ht]
\labellist
\small\hair 2pt
\pinlabel $\lambda_1$ at 17 45
\pinlabel $\lambda_0$ at 80 45
\pinlabel $L$ at 40 0
\endlabellist
\centering
\includegraphics[scale=1]{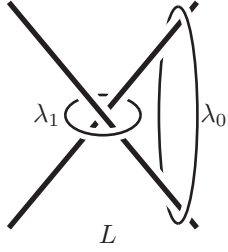}
\caption{The loops $\lambda_0$ and $\lambda_1$.}
\label{f:lambdas}
\end{figure}

\begin{prop}\label{p:spincob}
For any orientation $o$ on $L$ such that the crossing shown in Figure \ref{f:skein} is positive, the Spin structure $\t_{(L,o)}$ extends to a unique Spin structure $\s_o$ on the cobordism $V$ and does not admit an extension over $W$.  The restriction of $\s_o$ to $\Si(L_1)$ is the Spin structure $\t_{(L_1,o_1)}$, where $o_1$ is the orientation on $L_1$ induced by $o$.
\end{prop}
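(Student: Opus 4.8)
The plan is to use the criterion recalled just before the statement to reduce everything to the single assertion that $\t_{(L,o)}$ extends over $V$, and then to prove this (and the final sentence) by realising $V$ as a piece of a branched double cover of $B^4$ along a Seifert surface of $(L,o)$, together with the description of $\t_{(L,o)}$ in those terms.

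\textbf{Step 1: $\t_{(L,o)}$ extends over exactly one of $V$, $W$.} By the discussion preceding the proposition, $\t_{(L,o)}$ extends over $W$ if and only if it restricts to the bounding Spin structure on the framed knot $(C,\tla_0)$; and, since $V$ is — after dualising the handle and reversing orientation — the $2$--handle cobordism attached to $\Si(L)$ along $(C,\tla_1)$, the structure $\t_{(L,o)}$ extends over $V$ if and only if it restricts to the bounding Spin structure on $(C,\tla_1)$. In each case the extension, when it exists, is unique. Now $\tla_0$ and $\tla_1$ differ by a meridian of $C$, and changing a knot's framing in a Spin $3$--manifold by one meridian interchanges the bounding and the Lie Spin structures on the resulting framed circle; hence exactly one of the two restrictions is bounding. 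It therefore suffices to exhibit an extension of $\t_{(L,o)}$ over $V$.

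\textbf{Step 2: an extension over $V$.} Because the crossing in Figure~\ref{f:skein} is positive, $L_1$ is the oriented resolution: $o$ induces an orientation $o_1$ on $L_1$, and the standard saddle cobordism $S\subset S^3\times[0,1]$ from $L_1$ to $L$ carries an orientation restricting to $o_1$ on $L_1$ and to $o$ on $L$. The cobordism $V$ is the double cover of $S^3\times[0,1]$ branched along $S$ (the surgery on the core $C$ described in Section~\ref{s:prelim} being exactly the effect of this saddle on the branched covers). I would then pick a pushed--in connected Seifert surface $F_1$ for $(L_1,o_1)$ in $B^4$, form the connected Seifert surface $F:=F_1\cup_{L_1}S$ for $(L,o)$, and let $Z$, $Z_1$ be the double covers of $B^4$ branched along $F$, $F_1$; these are simply connected, with even intersection forms (the symmetrised Seifert forms), hence Spin, and $Z=Z_1\cup_{\Si(L_1)}V$. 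By the description of the Turaev Spin structure as the one induced on $\Si(L)$ by a double branched cover of $B^4$ along a pushed--in Seifert surface of $(L,o)$ (cf.\ \cite{Tu88}), the unique Spin structure $\mathfrak z$ on $Z$ restricts on $\Si(L)=\partial Z$ to $\t_{(L,o)}$. Then $\s_o:=\mathfrak z|_V$ is a Spin structure on $V$ extending $\t_{(L,o)}$; by Step~1 it is the unique extension, and $\t_{(L,o)}$ does not extend over $W$.

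\textbf{Step 3: the restriction to $\Si(L_1)$, and the main obstacle.} With $\mathfrak z$ as above, $\mathfrak z|_{Z_1}$ is a Spin structure on the simply connected manifold $Z_1$, hence the unique one, which by the same description restricts on $\Si(L_1)=\partial Z_1$ to $\t_{(L_1,o_1)}$; since $\mathfrak z|_{Z_1}$ and $\mathfrak z|_V=\s_o$ induce the same Spin structure on the separating hypersurface $\Si(L_1)\subset Z$, we conclude $\s_o|_{\Si(L_1)}=\t_{(L_1,o_1)}$. The step I expect to require the most care is the input invoked in Step~2: pinning down that $\t_{(L,o)}$ is indeed the boundary of the (unique) Spin structure on the branched double cover of $B^4$ along a Seifert surface of $(L,o)$ — either by unwinding Turaev's construction or by citing it — together with the compatible identification of $V$ as the branched double cover along the oriented saddle, and the convention--sensitive point that it is precisely a \emph{positive} crossing that makes $L_1$ (rather than $L_0$) the oriented resolution, so that the Spin filling sits on the $V$ side. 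If one prefers to avoid the Seifert--surface characterisation, the alternative is to finish from Step~1 by a direct local computation of the restriction of $\t_{(L,o)}$ to the branched double cover of the positive--crossing tangle.
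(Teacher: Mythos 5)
Your proof is essentially correct but takes a genuinely different route from the paper's. The paper works entirely locally with Turaev's definition: writing $\t_{(L,o)}$ as the lift $\tilde\s$ of the Spin structure of $S^3$ twisted by the linking--number cocycle $h$, it computes $h(\tla_1)=0$ and $h(\tla_0)=1$ directly from the positive crossing, checks that the framing of $\tla_i$ induced by $C$ agrees with the pullback of the $0$--framing, and reads off that the restriction of $\t_{(L,o)}$ to $(C,\tla_1)$ is bounding while the restriction to $(C,\tla_0)$ is Lie; the final sentence is then obtained from the elementary principle, recalled in Section~\ref{s:prelim}, that two Spin structures on a closed $3$--manifold agreeing outside a solid torus coincide. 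Your Step~1 (the framing--change dichotomy) is a clean reduction that the paper does not need, since the computation of $h$ gives both restrictions at once. The genuinely different ingredient is your Step~2: you replace the local computation by the global statement that the double cover of $B^4$ branched along a pushed--in connected Seifert surface of $(L,o)$ is a simply connected Spin filling of $(\Si(L),\t_{(L,o)})$, combined with the identification of $V$ as the branched double cover of the oriented saddle. Both inputs are true and standard, but you should be careful with attribution: the $4$--dimensional characterisation is not stated in that form in \cite{Tu88} --- it is essentially a lemma of \cite{DO12} --- so it either needs that citation or must be derived from Turaev's cocycle description, which is precisely what the paper's local computation amounts to in the relevant model. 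You should also arrange $F_1\cup_{L_1}S$ to be (isotopic to) an embedded pushed--in Seifert surface and connected, e.g.\ by taking $F_1$ to be the Seifert--algorithm surface of the resolved diagram, tubed if necessary; this is routine. What your approach buys is a conceptual explanation of why the Spin filling sits on the $V$ side (it is the oriented resolution), and it reproves the last sentence for free; what the paper's approach buys is self--containedness, reducing everything to the two parities $h(\tla_0)$ and $h(\tla_1)$ with no appeal to Seifert surfaces or $4$--manifold topology.
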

\begin{proof}
Let $\pi:\Si(L)\to S^3$ be the branched covering map.  The Spin structure $\t_{(L,o)}$ is the lift $\tilde\s$ of the Spin structure restricted from $S^3$ to $S^3-L$, 
twisted by $h\in H^1(\Si(L)-\pi^{-1}(L);\Z/2\Z)$, where the value of $h$ on a curve $\gamma$ is the parity of half the sum of the linking numbers of 
$\pi\circ\gamma$ about the components of $L$ (following Turaev \cite[\S2.2]{Tu88}). Suppose that the crossing in Figure \ref{f:skein} is positive as, for example, illustrated in 
Figure \ref{f:orskein}, so that the orientation $o$ induces an orientation $o_1$ on $L_1$. 

\begin{figure}[ht]
\labellist\hair 2pt
\pinlabel $L_1$ at 40 -10
\pinlabel $L$ at 193 -10
\pinlabel $L_0$ at 354 -10
\endlabellist
\centering
\includegraphics[scale=0.6]{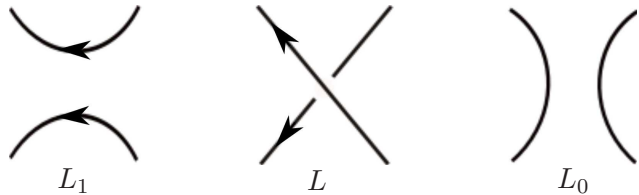}
\vspace{0.2cm}
\caption{The oriented link $(L,o)$ together with the oriented resolution $(L_1,o_1)$ and the unoriented resolution $L_0$.}
\label{f:orskein}
\end{figure}

Then, we can compute from Figure \ref{f:lambdas} that $h(\tla_1)=0$ and $h(\tla_0)=1$.  The Spin structure on $S^3$ restricts to the bounding structure on each of $\lambda_0$ and $\lambda_1$ using 
the $0$--framing. The map $\pi$ restricts to a diffeomorphism on neighbourhoods of $\tla_0$ and $\tla_1$. Therefore, the restriction of $\tilde\s$ to each of $\tla_0$ and $\tla_1$ using the 
pullback of the $0$--framing is also the bounding structure.  Also note that the preimage under $\pi$ of a disk bounded by $\lambda_i$ is an annulus with core $C$, so the framing of 
$\tla_i$ given by $C$ is the same as the pullback of the $0$--framing.

The spin structure $\t_{(L,o)}$ is equal to $\tilde\s$ twisted by $h$.  Since $\tilde\s$ restricts to the bounding spin structure on $\tla_1$, and $h(\tla_1)=0$, we see that $\t_{(L,o)}$ restricts to the bounding 
Spin structure on $\tla_1$ using the framing given by $C$.  On the other hand since $h(\tla_0)=1$, $\t_{(L,o)}$ restricts to the Lie Spin structure on $\tla_0$, 
again using the framing given by $C$.
It follows that $\t_{(L,o)}$ admits a unique extension $\s_o$ over the 2--handle giving the cobordism $V$, and does not extend over the cobordism $W$.

The restriction of $\s_o$ to $\Si(L_1)$ coincides with $\t_{(L_1,o_1)}$ outside of the solid torus $\tB$, and therefore also on the closed manifold $\Si(L_1)$.
\end{proof}

\section{Relations between correction terms}\label{s:correction}

By~\cite[Proposition~2.1]{OSz05} we have the following exact triangle:
\[
\begin{graph}(6,2)
\graphlinecolour{1}\grapharrowtype{2}
\textnode {A}(1,1.6){$\wHF(\Si(L_1))$}
\textnode {B}(5, 1.6){$\wHF(\Si(L))$}
\textnode {C}(3, 0.1){$\wHF(\Si(L_0))$}
\diredge {A}{B}[\graphlinecolour{0}]
\diredge {B}{C}[\graphlinecolour{0}]
\diredge {C}{A}[\graphlinecolour{0}]
\freetext (3,1.9){$F_V$}
\freetext (4.5,0.7){$F_W$}
\end{graph}
\]
where the maps $F_V$ and $F_W$ are induced by the surgery cobordisms of~\eqref{e:surgcobs}.
(All the Heegaard Floer groups are taken with $\Z/2\Z$ coefficients.) 

By~\cite[Proposition~3.3]{OSz05} (and notation as in that paper), if $L\subset S^3$ is a quasi--alternating link and $L_0$ and $L_1$ 
are resolutions of $L$ as in Definition~\ref{d:qa} then $\Si(L)$, $\Si(L_0)$ and $\Si(L_1)$ are $L$--spaces. Moreover, by assumption 
we have 
\begin{equation}\label{e:det}
|H^2(\Si(L);\Z)| = |H^2(\Si(L_0);\Z)| + |H^2(\Si(L_1);\Z)|.
\end{equation}
Since for every $L$--space $Y$ we have $|H^2(Y;\Z)| = \dim \wHF(Y)$, the Heegaard Floer surgery exact triangle reduces 
to a short exact sequence:
\begin{equation}\label{e:exaseq}
0 \to \wHF(\Si(L_1)) \xra{F_V} \wHF(\Si(L)) \xra{F_W} \wHF(\Si(L_0))\to 0.
\end{equation}
The type of argument employed in the proof of the following proposition goes back to~\cite{LSII07} 
and was also used in~\cite{St08}.

\begin{prop}\label{p:correction} 
Let $L$ be a quasi--alternating link and let $L_0$, $L_1$ be resolutions of $L$ as in Definition~\ref{d:qa}.  Let $o$ be an orientation on 
$L$ for which the crossing of Figure~\ref{f:skein} is positive, and let $o_1$ be the induced orientation on $L_1$. 
Then, the following holds: 
\begin{equation*}
-4d(\Si(L),\t_{(L,o)}) = -4d(\Si(L_1),\t_{(L_1,o_1)}) - 1. 
\end{equation*}
\end{prop}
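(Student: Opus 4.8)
The strategy is to exploit the short exact sequence~\eqref{e:exaseq} together with the Spin extension result of Proposition~\ref{p:spincob}, following the method of~\cite{LSII07, St08}. First I would recall that for an $L$-space $Y$ and a Spin$^c$ structure $\mathfrak s$, $\wHF(Y,\mathfrak s) = \mathbb Z/2\mathbb Z$ is supported in the single degree $d(Y,\mathfrak s)$. The cobordism map $F_V = F_{V,\mathfrak s_o}$ associated to the Spin$^c$ extension $\mathfrak s_o$ of $\mathfrak t_{(L,o)}$ shifts degree by the usual quantity $\frac{c_1(\mathfrak s_o)^2 - 2\chi(V) - 3\sigma(V)}{4}$; since $V$ is obtained by attaching a single $2$-handle, $\chi(V) = 1$, and since $\mathfrak s_o$ is a Spin structure $c_1(\mathfrak s_o)$ is torsion so $c_1(\mathfrak s_o)^2 = 0$. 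The signature $\sigma(V)$ is $0$, $+1$ or $-1$ depending on the sign of the framing; the content of Proposition~\ref{p:spincob} (together with the fact that $\mathfrak t_{(L,o)}$ restricts to the \emph{Lie} Spin structure on $\widetilde\lambda_0$) is precisely what pins this down, and I expect $\sigma(V) = -1$, giving a degree shift of $\frac{0 - 2 - 3(-1)}{4} = \frac14$.

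Next I would argue that the relevant component of $F_V$ is an isomorphism onto its image. Because $\Sigma(L)$ and $\Sigma(L_1)$ are $L$-spaces, $\wHF(\Sigma(L_1),\mathfrak t_{(L_1,o_1)})$ is one-dimensional, and the total map $F_V$ in~\eqref{e:exaseq} is injective; restricting to the summand indexed by $\mathfrak t_{(L_1,o_1)}$ and its image, the induced map is a nonzero (hence, over $\mathbb Z/2\mathbb Z$, an iso) degree-homogeneous map. The subtlety here is identifying which Spin$^c$ structures on $\Sigma(L)$ receive this summand: one must check that the image lands in $\wHF(\Sigma(L),\mathfrak t_{(L,o)})$. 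This follows because $F_V$ decomposes over Spin$^c$ structures on $V$, the only Spin$^c$ structure on $V$ whose restriction to $\Sigma(L_1)$ is $\mathfrak t_{(L_1,o_1)}$ and which contributes is the Spin extension $\mathfrak s_o$ (uniqueness of the extension, from Proposition~\ref{p:spincob}), and $\mathfrak s_o|_{\Sigma(L)} = \mathfrak t_{(L,o)}$; conversely any other Spin$^c$ structure on $V$ restricting to $\mathfrak t_{(L_1,o_1)}$ differs by something whose restriction to $\Sigma(L_1)$ is nonzero, contradiction, or else is not realized. This bookkeeping with Spin$^c$ structures on the cobordism is the main obstacle, and I would want to be careful that the full set of Spin$^c$ structures on $V$ restricting to a fixed one on $\Sigma(L_1)$ is an $H^2(V,\partial V)$-torsor that maps appropriately.

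Granting that $F_{V,\mathfrak s_o}\colon \wHF(\Sigma(L_1),\mathfrak t_{(L_1,o_1)}) \to \wHF(\Sigma(L),\mathfrak t_{(L,o)})$ is an isomorphism of degree $\frac14$, the degree of the generator on the source is $d(\Sigma(L_1),\mathfrak t_{(L_1,o_1)})$ and on the target is $d(\Sigma(L),\mathfrak t_{(L,o)})$, so
\[
d(\Sigma(L),\mathfrak t_{(L,o)}) = d(\Sigma(L_1),\mathfrak t_{(L_1,o_1)}) + \tfrac14.
\]
Multiplying by $-4$ yields exactly the claimed identity $-4d(\Sigma(L),\mathfrak t_{(L,o)}) = -4d(\Sigma(L_1),\mathfrak t_{(L_1,o_1)}) - 1$. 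The only remaining point to nail down is the sign of $\sigma(V)$; I would compute it from the linking form / intersection form of the $2$-handle attachment, using that the framing $\widetilde\lambda_1$ together with the datum that $h(\widetilde\lambda_0) = 1$, $h(\widetilde\lambda_1) = 0$ (from the proof of Proposition~\ref{p:spincob}) forces the self-intersection to have the sign consistent with $\mathfrak t_{(L,o)}$ failing to extend over $W$ — this sign consistency is really the crux, and getting it backwards would flip the $-1$ to $+1$, so it deserves the most care.
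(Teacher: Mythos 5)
Your overall skeleton matches the paper's: show that $F_{V,\s_o}$ carries the generator of $\wHF(\Si(L_1),\t_{(L_1,o_1)})$ to the generator of $\wHF(\Si(L),\t_{(L,o)})$, then apply the degree--shift formula with $c_1(\s_o)=0$, $\chi(V)=1$ and $\si(V)=-1$ to get the shift of $\tfrac14$, and multiply by $-4$. That final arithmetic is correct. Note, however, that the paper pins down $\si(V)=-1$ not from the framing data and the values of $h$ that you invoke, but simply by observing that Equation~\eqref{e:det} together with $\det(L_1)>0$ forces $V$ to be negative definite; this is the cleaner route to the sign you were worried about.

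The genuine gap is in your justification of the key claim $F_{V,\s_o}(\t_{(L_1,o_1)})=\t_{(L,o)}$. Injectivity of the total map $F_V$ only gives $F_V(\t_{(L_1,o_1)})\neq 0$; it does not show that the component of this element lying in the summand $\wHF(\Si(L),\t_{(L,o)})$ is nonzero. Your attempt to rule out other contributions does not work: uniqueness of the \emph{Spin} extension $\s_o$ (Proposition~\ref{p:spincob}) says nothing about the \emph{Spin$^c$} structures on $V$ restricting to $\t_{(L_1,o_1)}$ on $\Si(L_1)$. Since $b_2(V)=1$ and the boundary components are rational homology spheres, there are infinitely many such Spin$^c$ structures, differing by classes coming from $H^2(V,\Si(L_1);\Z)$, and a priori one of \emph{those} could be the structure whose summand map is nontrivial while $F_{V,\s_o}=0$, in which case the image would miss $\wHF(\Si(L),\t_{(L,o)})$ entirely. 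The paper closes exactly this gap with a conjugation--symmetry argument absent from your proposal: working over $\Z/2\Z$, the $\mathcal J$--equivariance~\eqref{e:conjug} shows that non--Spin cobordism Spin$^c$ structures contribute in cancelling pairs to any Spin component of the image; combining this with the surjectivity of $F_W$, the fact (again from Proposition~\ref{p:spincob}) that $W$ carries no Spin structure restricting to $\t_{(L,o)}$, and exactness of~\eqref{e:exaseq}, one deduces that the generator $\t_{(L,o)}$ must arise as the image under a genuine Spin structure on $V$ of a conjugation--fixed class, which is then forced to be $\s_o$ applied to $\t_{(L_1,o_1)}$. Without some argument of this kind your identification of the relevant component of $F_V$ does not go through.
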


\begin{proof} 
Since $\Si(L)$, $\Si(L_1)$ and $\Si(L_0)$ are $L$--spaces, we may think of the Spin$^c$ 
structures on these spaces as generators of their $\wHF$--groups, and we shall 
abuse our notation accordingly.
Let $V: \Si(L_1)\to\Si(L)$ be the surgery cobordism of~\eqref{e:surgcobs}, and let $\s_o$ be the unique Spin structure on $V$ which extends $\t_{(L,o)}$ as in 
Proposition~\ref{p:spincob}. Recall that, by definition, the map $F_U$ associated to a cobordism $U: Y_1\to Y_2$ is given by 
\[
F_U = \sum_{\s\in{\rm Spin}^c(U)} F_{U,\s}, 
\]
where $F_{U,\s}: \wHF(Y_1,\t_1)\to\wHF(Y_2,\t_2)$ and $\t_i = \s|_{Y_i}$ for $i=1,2$. We claim that 
\begin{equation}\label{e:claim}
F_{V,\s_o} (\t_{(L_1,o_1)}) = \t_{(L,o)}.
\end{equation}
The Heegaard Floer $\wHF$--groups admit a natural involution, usually denoted by ${\mathcal J}$.
The maps induced by cobordisms are equivariant with respect to the $\Z/2\Z$--actions associated to conjugation 
on Spin$^c$ structures and the ${\mathcal J}$--map on the  Heegaard Floer groups,  in the sense that,  
if $\bar{x}:= {\mathcal J}(x)$ for an element $x$, we have 
\begin{equation}\label{e:conjug}
F_{W,\bar{\s}} (\bar{x}) = \bar{F_{W,\s}(x)}
\end{equation}
for each $\s\in{\rm Spin}^c(W)$. Since by Proposition~\ref{p:spincob} there are no Spin structures on the surgery cobordism $W:\Si(L)\to\Si(L_0)$ of~\eqref{e:surgcobs}
which restrict to $\t_{(L,o)}$, the element $F_{W}(\t_{(L,o)})\in\wHF(\Si(L_0))$ has no Spin component. In fact, 
since $\t_{(L,o)}$ is fixed under conjugation and we are working over $\Z/2\Z$, \eqref{e:conjug} implies that the contribution of each  
non--Spin $\s\in{\rm Spin}^c(W)$ to a Spin component of $F_{W}(\t_{(L,o)})$ is cancelled by the contribution 
of $\bar{\s}$ to the same component. Therefore we may write 
\[
F_{W}(\t_{(L,o)}) = x + \bar{x}
\]
for some $x\in\wHF(\Si(L_0))$. By the surjectivity of $F_{W}$ there is some $y\in\wHF(\Si(L))$ with $F_{W}(y)=x$, therefore 
$F_{W}(\t_{(L,o)} + y + \bar{y}) = 0$, and by the exactness of~\eqref{e:exaseq} we have $\t_{(L,o)} + y + \bar{y} = F_{V}(z)$ 
for some $z\in\wHF(\Si(L_0))$. Since $F_{V}(\bar{z}) = \bar{F_{V}(z)} = F_{V}(z)$, the injectivity of $F_{V}$ implies  
$z=\bar{z}$. Moreover, $z$ must have some nonzero Spin component, otherwise we could 
write $z=u+\bar{u}$ and 
\[
F_{V}(u+\bar{u}) = \bar{F_{V}(u)} + \bar{F_{V}(\bar{u})} =  \bar{F_{V}(u)} + F_{V}(u)
\] 
could not have the Spin component $\t_{(L,o)}$.  This shows that there is a Spin structure $\t\in\wHF(\Si(L_1))$
such that $F_{V}(\t) = \t_{(L,o)}$. But, as we argued before for $F_{W}(\t_{(L,o)})$, in order for $F_{V}(\t)$ to have 
a Spin component it must be the case that there is some Spin structure $\s$ on $V$ such that $F_{V,\s}(\t)=\t_{(L,o)}$. 
Applying Proposition~\ref{p:spincob} we conclude $\s=\s_o$ and therefore $\t=\t_{(L_1,o_1)}$. 
This establishes Claim~\eqref{e:claim}. 

Using Equation~\eqref{e:det} and the fact that $\det(L_1)>0$ it is easy to check 
that $V$ is negative definite. The statement follows immediately from Equation~\eqref{e:claim} and the degree--shift 
formula in Heegaard Floer theory~\cite[Theorem~7.1]{OSz06} using the fact that $c_1(\s_o)=0$, $\si(V)=-1$ and $\chi(V)=1$. 
\end{proof} 


\section{The main result and a corollary}\label{s:final}

\main*

\begin{proof}
The statement trivially holds for the unknot, because the unknot has zero signature and 
the two--fold cover of $S^3$ branched along the unknot is $S^3$, whose only correction term vanishes. 
If $L$ is not the unknot and $L$ is quasi--alternating, there are quasi--alternating links $L_0$ and $L_1$ such that 
$\det(L) = \det(L_0) + \det(L_1)$ and $L$, $L_0$ and $L_1$ are related as in Figure~\ref{f:skein}. 
To prove the theorem it suffices to show that if the statement holds for $L_0$ and $L_1$ 
then it holds for $L$ as well. 

Denote by $L^m$ the mirror image of $L$, and by $o^m$ the orientation on $L^m$ naturally induced by 
an orientation $o$ on $L$.  The orientation--reversing diffeomorphism from $S^3$ to itself taking $L$ to $L^m$ 
lifts to one from $\Si(L)$ to $\Si(L^m)$  sending $\t_{(L,o)}$ to $\t_{(L^m,o^m)}$.  
Thus by~\cite[Theorem~8.10]{Li97} and~\cite[Proposition~4.2]{OSz03} we have  
\[
\si(L^m, o^m) = - \si(L,o)\quad\text{and}\quad 4d(\Si(L^m),\t_{(L^m,o^m)}) = 4d(-\Si(L),\t_{(L,o)}) = -4 d(\Si(L), \t_{(L,o)}), 
\]
therefore Equation~\eqref{e:main2} holds for $(L,o)$ if and only if it holds for $(L^m, o^m)$. Hence, without loss 
of generality we may now fix an orientation $o$ on $L$ so that the crossing appearing in 
Figure~\ref{f:skein} is positive. 

Denote by $o_1$ the orientation on $L_1$ naturally induced by $o$. By~\cite[Lemma~2.1]{MO07}
\begin{equation}\label{e:signature}
\si(L,o) = \si(L_1,o_1) - 1. 
\end{equation}
Since we are assuming that  the statement holds for $L_1$, we have 
\begin{equation}\label{e:correction}
\si(L_1,o_1) = -4 d(\Si(L_1), \t_{(L_1,o_1)}).
\end{equation}
Equations~\eqref{e:signature} and~\eqref{e:correction} together with  Proposition~\ref{p:correction} immediately 
imply Equation~\eqref{e:main2}.
\end{proof}

\begin{cor}\label{c:jones-turaev}
Let $(L,o)$ be an oriented, quasi--alternating link. Then, 
\begin{equation*}
 \tau(\Si(L),\t_{(L,o)}) = -\frac1{12} \frac{V'_{(L,o)}(-1)}{V_{(L,o)}(-1)},
\end{equation*}
where $\tau$ is Turaev's torsion function and $V_{(L,o)}(t)$ is the Jones polynomial of $(L,o)$.
\end{cor}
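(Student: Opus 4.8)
The plan is to combine Theorem~\ref{t:main2} with two results from the literature that separately relate (i) the correction term $d(\Si(L),\t_{(L,o)})$ to Turaev's torsion function $\tau(\Si(L),\t_{(L,o)})$, and (ii) the Casson--Walker invariant (or rather the relevant combination of Heegaard Floer/torsion data) of $\Si(L)$ to the Jones polynomial of $L$ evaluated at $-1$. Concretely, I would invoke Rustamov's result expressing, for a rational homology sphere $Y$, the Casson--Walker invariant $\lambda(Y)$ in terms of $\tau(Y,\t)$ and $d(Y,\t)$ summed over $\Spin^c$ structures (together with the $L$--space simplification), and Mullins' theorem relating the Casson--Walker invariant of a double branched cover $\Si(L)$ to $\frac{V'_{(L,o)}(-1)}{V_{(L,o)}(-1)}$ and the signature $\si(L,o)$.

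First I would record that since $L$ is quasi--alternating, $\Si(L)$ is an $L$--space (by \cite[Proposition~3.3]{OSz05}, as already used in Section~\ref{s:correction}), so the Heegaard Floer homology is as simple as possible and Rustamov's surgery formula for $\tau$ takes its simplest form: for each $\Spin^c$ structure the Casson--Walker contribution is governed by $d$ and $\tau$ with no correction from higher homology. Next I would apply Mullins' formula for the Casson--Walker invariant of $\Si(L)$ — this is where the Jones polynomial enters — writing $\lambda(\Si(L))$ as an explicit multiple of $\frac{V'_{(L,o)}(-1)}{V_{(L,o)}(-1)}$ plus a multiple of $\si(L,o)$. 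Then I would use Theorem~\ref{t:main2} to substitute $\si(L,o) = -4d(\Si(L),\t_{(L,o)})$, eliminating the signature in favour of the correction term. Finally, equating the two expressions for $\lambda(\Si(L))$ and solving for $\tau(\Si(L),\t_{(L,o)})$ should yield exactly the stated identity, after the constant $-\tfrac1{12}$ falls out of bookkeeping the normalisation conventions in Rustamov's and Mullins' papers.

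The main obstacle I anticipate is \emph{not} any deep new argument but rather reconciling conventions: the precise normalisation of Turaev's torsion function $\tau$, the sign and factor conventions in the Casson--Walker invariant (Walker's $\lambda$ versus Lescop's normalisation, and the factor of $2$ or $|H_1|$ that often appears for double branched covers), the convention for the Jones polynomial of a multi-component link and its dependence on orientation, and the sign in Mullins' formula. I would need to check carefully that, with the conventions fixed as in \cite{Tu88} for $\tau$, in \cite{OSz03} for $d$, and in the cited work of Rustamov and Mullins, all factors combine to give precisely $-\tfrac1{12}$; a secondary point to verify is that $V_{(L,o)}(-1) = \pm\det(L) \neq 0$ for quasi--alternating $L$, so the right-hand side is well defined, and that any orientation-dependence on the two sides matches (Turaev's $\t_{(L,o)}$ depends only on the quasi-orientation, and the ratio $V'/V$ at $-1$ should likewise be insensitive to overall reversal). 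Once the normalisations are pinned down the corollary follows by a one-line algebraic substitution.
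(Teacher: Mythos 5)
Your proposal is correct and follows essentially the same route as the paper: it combines Rustamov's formula $d(\Si(L),\t_{(L,o)}) = 2\chi(HF^+_{\rm red}(\Si(L))) + 2\tau(\Si(L),\t_{(L,o)}) - \lambda(\Si(L))$ (simplified by the $L$--space property of $\Si(L)$), Mullins' formula expressing $\lambda(\Si(L))$ in terms of $V'_{(L,o)}(-1)/V_{(L,o)}(-1)$ and $\si(L,o)$, and Theorem~\ref{t:main2} to eliminate the signature in favour of the correction term. The only minor imprecision is that the version of Rustamov's result actually used is stated for the fixed Spin structure $\t_{(L,o)}$ rather than as a sum over $\mathrm{Spin}^c$ structures, but this does not affect the argument.
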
 

\begin{proof} 
By~\cite[Theorem~3.4]{Ru04} we have
\begin{equation}\label{e:rusta}
d(\Si(L),\t_{(L,o)}) = 2\chi(HF^+_{\rm red} (\Si(L))) + 2 \tau(\Si(L),\t_{(L,o)}) - \lambda(\Si(L)), 
\end{equation}
where $\la$ denotes the Casson--Walker invariant, normalized so that it takes value $-2$ on the Poincar\'e sphere 
oriented as the boundary of the negative $E_8$ plumbing. Moreover, since $L$ is quasi--alternating 
$\Si(L)$ is an $L$--space, therefore the first summand on the right--hand side of~\eqref{e:rusta} vanishes. 
By~\cite[Theorem~5.1]{Mu93}, when $\det(L) > 0$ we have
\begin{equation}\label{e:mullins} 
\la(\Si(L)) = -\frac1{6}\frac{V'_{(L,o)}(-1)}{V_{(L,o)}(-1)} + \frac14\si(L,o)),
\end{equation}
Therefore, when $(L,o)$ is an oriented quasi--alternating link, Theorem~\ref{t:main2} together with 
Equations~\eqref{e:rusta} and~\eqref{e:mullins} yield the statement.
\end{proof} 

\bibliographystyle{amsplain}
\bibliography{biblio}

\providecommand{\bysame}{\leavevmode\hbox to3em{\hrulefill}\thinspace}
\providecommand{\MR}{\relax\ifhmode\unskip\space\fi MR }
\providecommand{\MRhref}[2]{%
  \href{http://www.ams.org/mathscinet-getitem?mr=#1}{#2}
}
\providecommand{\href}[2]{#2}
\begin{thebibliography}{10}

\bibitem{CK09}
A.~Champanerkar and I.~Kofman, \emph{Twisting quasi-alternating links},
  Proceedings of the American Mathematical Society \textbf{137} (2009), no.~7,
  2451--2458.

\bibitem{CO12}
A.~Champanerkar and P.~Ording, \emph{A note on quasi-alternating {M}ontesinos
  links}, arXiv preprint 1205.5261.

\bibitem{DO12}
A.~Donald and B.~Owens, \emph{Concordance groups of links}, Algebraic and
  {G}eometric {T}opology \textbf{12} (2012), 2069--2093.

\bibitem{Gr10}
J.~Greene, \emph{Homologically thin, non-quasi-alternating links}, Mathematical
  {R}esearch {L}etters \textbf{17} (2010), no.~1, 39--50.

\bibitem{GW11}
J.E. Greene and L.~Watson, \emph{Turaev torsion, definite 4-manifolds, and
  quasi-alternating knots}, arXiv preprint 1106.5559.

\bibitem{JS08}
S.~Jablan and R.~Sazdanovi{\'c}, \emph{Quasi-alternating links and odd
  homology: computations and conjectures}, arXiv preprint 0901.0075.

\bibitem{Kirby89}
R.C. Kirby, \emph{The topology of {$4$}-manifolds}, Lecture Notes in
  Mathematics, vol. 1374, Springer-Verlag, Berlin, 1989.

\bibitem{Li97}
W.B.R. Lickorish, \emph{An introduction to knot theory}, Graduate Texts in
  Mathematics, vol. 175, Springer Verlag, 1997.

\bibitem{LSII07}
P.~Lisca and A.I. Stipsicz, \emph{Ozsv\'ath-{S}zab\'o invariants and tight
  contact three-manifolds, {II}}, Journal of Differential Geometry \textbf{75}
  (2007), no.~1, 109--142.

\bibitem{MOw07}
C.~Manolescu and B.~Owens, \emph{A concordance invariant from the {F}loer
  homology of double branched covers}, International Mathematics Research
  Notices \textbf{2007} (2007), no.~20, Art. ID rnm077.

\bibitem{MO07}
C.~Manolescu and P.~Ozsv{\'a}th, \emph{On the {K}hovanov and knot {F}loer
  homologies of quasi-alternating links}, Proceedings of G\"okova
  Geometry--Topology Conference (2007), G\"okova Geometry/Topology Conference
  (GGT), 2008, pp.~60--81.

\bibitem{Mu93}
D.~Mullins, \emph{The generalized {C}asson invariant for 2--fold branched
  covers of ${S}^3$ and the {J}ones polynomial}, Topology \textbf{32} (1993),
  no.~2, 419--438.

\bibitem{OSz03}
P.~Ozsv{\'a}th and Z.~Szab{\'o}, \emph{Absolutely graded {F}loer homologies and
  intersection forms for four-manifolds with boundary}, Advances in Mathematics
  \textbf{173} (2003), no.~2, 179--261.

\bibitem{OSz05}
\bysame, \emph{On the {H}eegaard {F}loer homology of branched double-covers},
  Advances in Mathematics \textbf{194} (2005), no.~1, 1--33.

\bibitem{OSz06}
\bysame, \emph{Holomorphic triangles and invariants for smooth four-manifolds},
  Advances in Mathematics \textbf{202} (2006), no.~2, 326--400.

\bibitem{QCQ12}
K.~Qazaqzeh, N.~Chbili, and B.~Qublan, \emph{Characterization of
  quasi-alternating {M}ontesinos links}, arXiv preprint 1205.4650.

\bibitem{QQJ12}
K.~Qazaqzeh, B.~Qublan, and A.~Jaradat, \emph{A new property of
  quasi-alternating links}, arXiv preprint 1205.4291.

\bibitem{Ru04}
R.~Rustamov, \emph{Surgery formula for the renormalized {E}uler characteristic
  of {H}eegaard {F}loer homology}, arXiv preprint math/0409294.

\bibitem{Sa00}
N.~Saveliev, \emph{A surgery formula for the $\mu$ -invariant}, Topology and
  its Applications \textbf{106} (2000), no.~1, 91--102.

\bibitem{St08}
A.I. Stipsicz, \emph{On the $\overline\mu$-invariant of rational surface
  singularities}, Proceedings of the American Mathematical Society \textbf{136}
  (2008), no.~11, 3815--3823.

\bibitem{Tu88}
V.~Turaev, \emph{Classification of oriented {M}ontesinos links via spin
  structures}, Topology and {G}eometry -- {R}ohlin {S}eminar (Oleg~Y. Viro,
  ed.), Lecture Notes in Mathematics, vol. 1346, Springer, 1988, pp.~271--289.

\bibitem{Wa09}
L.~Watson, \emph{A surgical perspective on quasi-alternating links},
  Low--dimensional and Symplectic Topology, 2009 Georgia International Topology
  Conference (Michael Usher, ed.), Proceedings of Symposia in Pure Mathematics,
  vol.~82, American Mathematical Society, 2011, pp.~39--51.

\bibitem{Wi09}
T.~Widmer, \emph{Quasi-alternating {M}ontesinos links}, Journal of Knot Theory
  and Its Ramifications \textbf{18} (2009), no.~10, 1459--1469.

\end{thebibliography}
\end{document}